\documentclass[a4paper,12pt,reqno]{amsart}
\usepackage{graphicx}

\usepackage{color}

\usepackage{amsmath,amstext,amssymb,amsopn,amsthm}
\usepackage{url}

\usepackage[margin=30mm]{geometry}
\usepackage{eucal,mathrsfs,dsfont}
\usepackage{soul}

\newtheorem{theorem}{Theorem}[section]
\newtheorem{corollary}[theorem]{Corollary}

\newtheorem{proposition}[theorem]{Proposition}

\theoremstyle{definition}

\theoremstyle{remark}
\newtheorem{remark}[theorem]{Remark}


\definecolor{mr}{rgb}{0.1,0.2,0.7}


\newcommand{\R}{\mathds{R}}

\newcommand{\N}{{\mathds{N}}}

\newcommand{\RR}{\mathrm{I\kern-0.20emR}}
\newcommand{\D}{\mathrm{d}\kern0.2pt}

\newcommand{\vp}{{\varphi}}



\title{Mid-concavity of survival probability for isotropic L{\'e}vy processes}
\author[T. Kulczycki]{Tadeusz Kulczycki}
\thanks{The research was supported in part by NCN grant no. 2011/03/B/ST1/00423.}
\address{Wroc{\l}aw University of Technology, Wyb. Wyspia{\'n}skiego 27, 50-370 Wroc{\l}aw, Poland.}
\email{Tadeusz.Kulczycki@pwr.edu.pl}

\pagestyle{headings}

\begin{document}
\begin{abstract}
Let $X$ be a symmetric, pure jump, unimodal L{\'e}vy process in $\mathds{R}$ with an infinite L{\'e}vy measure. We prove that for any fixed $t > 0$ the survival probability $P^x(\tau_{(-a,a)} > t)$ is nondecreasing on $(-a,0]$, nonincreasing on $[0,a)$ and concave on $(-a/2,a/2)$, where $a > 0$ and $\tau_{(-a,a)}$ is the first exit time of the process $X$ from $(-a,a)$. We also show a similar statement for sets $(-a,a) \times F \subset \mathds{R}^d$. 
\end{abstract}

\maketitle

\section{Introduction}
The main purpose of this paper is to investigate the monotonicity and concavity properties of the survival probability  for some L{\'e}vy processes in $\R^d$. Let $\tau_D = \inf\{t \ge 0: \, X_t \notin D\}$ be the first exit time of an open, nonempty set $D \subset \R^d$ of the process $X$. We first formulate our result in one-dimensional setting. 

\begin{theorem}
\label{main1}
Let $X$ be a symmetric, pure jump, unimodal L{\'e}vy process in $\R$ with an infinite L{\'e}vy measure. Let $D =(-a,a)$, where $a > 0$. Put $\psi_t^D(x) = P^x(\tau_D > t)$ for $t \ge 0$ and $x \in \R$. Then for any $t > 0$ the function $x \to \psi_t^D(x)$ is nondecreasing on $(-a,0]$, nonincreasing on $[0,a)$ and concave on $(-a/2,a/2)$.
\end{theorem}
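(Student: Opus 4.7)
\emph{Plan.} The symmetry of $X$ ($X_t \stackrel{d}{=} -X_t$) and of $D = (-a,a)$ makes $\psi_t^D$ even in $x$. Together with the smoothness of $\psi_t^D$ on $D$ (which follows from the infinite L\'evy measure hypothesis), this reduces the theorem to showing that $\psi_t^D$ is nonincreasing on $[0,a)$ and concave on $[0,a/2)$. My approach to the remaining part is to decompose $\psi_t^D$ via Hunt's formula into a ``free'' piece plus a correction term, and to handle each piece separately.

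\emph{Hunt's decomposition.} Using Hunt's formula for the killed transition density, $p_t^D(x,y) = p_t(x-y) - E^x\bigl[p_{t-\tau_D}(X_{\tau_D}-y);\,\tau_D \le t\bigr]$, and integrating in $y \in D$, I would write $\psi_t^D(x) = g_t(x) - h_t(x)$ with
\[
g_t(x) = \int_{x-a}^{x+a} p_t(z)\,dz, \qquad h_t(x) = E^x\bigl[g_{t-\tau_D}(X_{\tau_D});\,\tau_D \le t\bigr].
\]
The ``free'' term $g_t$ is handled directly: from the symmetric unimodality of $p_t$, a computation gives $g_t'(x) = p_t(x+a) - p_t(x-a) \le 0$ on $[0,a)$ and $g_t''(x) = p_t'(x+a) - p_t'(x-a) \le 0$ on $(-a,a)$, since $p_t' \le 0$ on $(0,\infty)$ and $p_t' \ge 0$ on $(-\infty,0)$. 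Thus $g_t$ is already nonincreasing on $[0,a)$ and concave on all of $(-a,a)$, and both conclusions of the theorem reduce to matching statements about the correction $h_t$.

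\emph{Analysis of $h_t$ via Ikeda--Watanabe.} By the Ikeda--Watanabe formula,
\[
h_t(x) = \int_0^t \int_D \int_{D^c} p_s^D(x,y)\,\nu(z-y)\,g_{t-s}(z)\,dz\,dy\,ds,
\]
where $\nu$ is the L\'evy measure. The monotonicity of $\psi_t^D$ on $[0,a)$ should follow from a rearrangement-type property: $x \mapsto p_s^D(x,y)$ is itself nonincreasing on $[0,a)$ for each fixed $y \in D$, a fact known for symmetric unimodal L\'evy processes on symmetric intervals. For the concavity on $(-a/2,a/2)$, I would differentiate the Ikeda--Watanabe representation of $h_t$ twice in $x$ (permissible since the infinite L\'evy measure makes $x \mapsto p_s^D(x,y)$ smooth on $D$) and show that the resulting integrand has the correct sign, so that $h_t$ is convex on $(-a/2,a/2)$ and hence $\psi_t^D = g_t - h_t$ is concave there.

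\emph{Main obstacle.} The hardest step is proving the convexity of $h_t$ on $(-a/2,a/2)$. It requires a delicate combination of the symmetry of $\nu$, the unimodality of both $p_t$ and $p_s^D(\cdot,y)$ on $D$, and the geometric fact that for $|x| < a/2$ \emph{both} boundary points $\pm a$ lie at distance at least $a/2$ from $x$---which is precisely what forces the restriction of the concavity to the central half-interval and prevents it from extending to the whole of $D$.
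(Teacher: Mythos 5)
There are genuine gaps here, and the central one is that the lemma you lean on for monotonicity is false. For a fixed $y \in D$ the function $x \mapsto p_D(s,x,y)$ is \emph{not} nonincreasing on $[0,a)$: for small $s$ one has $p_D(s,x,y) \approx p_s(x-y)$, which peaks at $x=y$, so taking $y$ close to $a$ gives a function that increases on most of $[0,a)$. The correct statement available for symmetric unimodal processes (from the difference-process results of \cite{KR2015}) is a \emph{reflection} inequality, $p_U(s,x,y) \ge p_U(s,T_U(x),y)$ for $x,y$ lying on the same side of the midpoint of a sub-interval $U$, where $T_U$ is reflection through that midpoint; this compares $x$ with its mirror image, not with a neighbouring point. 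Converting it into monotonicity of $\psi_t^D$ forces one to run the strong Markov property on a sub-window $U=(-a,\,a+x'+x'')$ whose midpoint is $(x'+x'')/2$, so that $x''$ reflects onto $x'$ and the difference $\psi_t^D(x'')-\psi_t^D(x')$ becomes an integral of a signed kernel against $\psi_{t-s}^D$ over $D\setminus\overline{U}$, which lies entirely on the favourable side of the midpoint. Your decomposition $\psi_t^D=g_t-h_t$ does not see this structure, and in any case the signs do not cooperate: $g_t$ nonincreasing and $h_t$ nonincreasing on $[0,a)$ would say nothing about $g_t-h_t$.

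The concavity half has two further problems. First, the differentiations you invoke are not available: an infinite L\'evy measure guarantees only that $p_t(dx)$ has a radially nonincreasing density, not that $p_t$ or $x\mapsto p_D(s,x,y)$ is continuous, let alone $C^2$ (the paper explicitly allows $p_t(0)=\infty$ and only imposes a Hartman--Wintner type condition, which yields continuity of $p_t$, in its final section; its main proof is entirely derivative-free). Second, and more seriously, ``show that the resulting integrand has the correct sign'' is not an argument: $\partial_x^2 p_D(s,x,y)$ cannot have a fixed sign on $(-a/2,a/2)$ uniformly in $y$ (near its peak the kernel is concave in $x$), so convexity of $h_t$, if true at all, would require cancellations across $y$ and $z$ for which you give no mechanism. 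The paper obtains mid-concavity not by a pointwise sign computation but by comparing the two increments $\psi_t^D(x'')-\psi_t^D(x')$ and $\psi_t^D(x''')-\psi_t^D(x'')$ through a translated copy $W=U+v$ of the reflection window, using the already-proved monotonicity on the right part of $D$ together with the vanishing of $\psi_{t-s}^D$ to the left of $-a$; the restriction to $(-a/2,a/2)$ enters precisely so that $U$ and $W$ fit inside $(-a,a)$ with left endpoint $-a$, not because both boundary points are far from $x$. Your computation for $g_t$ is fine (and can be done with second differences, avoiding derivatives), but everything that is actually hard in the theorem remains unproved.
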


The next theorem is the generalization of the above result to higher dimensions.
\begin{theorem}
\label{main2}
Let $X$ be an isotropic, pure jump, unimodal L{\'e}vy process in $\R^d$, $d \ge 2$ with an infinite L{\'e}vy measure. Let $D =(-a,a) \times F$, where $a > 0$ and $F \subset \R^{d -1}$ be a bounded Lipschitz domain. Put $\psi_t^D(x) = P^x(\tau_D > t)$ for $t \ge 0$, $x \in \R^d$ and let $e_1 = (1,0,\ldots,0) \in \R^d$. Then for any $t > 0$ and $\tilde{x} \in \{0\} \times F$ the function $y \to \psi_t^D(y e_1 +\tilde{x})$ is nondecreasing on $(-a,0]$, nonincreasing on $[0,a)$ and concave on $(-a/2,a/2)$.
\end{theorem}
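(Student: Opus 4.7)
My strategy is to reduce Theorem~\ref{main2} to (a natural random-walk analog of) Theorem~\ref{main1} by conditioning on the last $d-1$ coordinates. Decompose $X_s = (Y_s, Z_s)$ with $Y_s = X_s^{(1)} \in \R$ and $Z_s = (X_s^{(2)}, \dots, X_s^{(d)}) \in \R^{d-1}$, and set $\tau^Y = \inf\{s \ge 0 : Y_s \notin (-a,a)\}$, $\tau^Z = \inf\{s \ge 0 : Z_s \notin F\}$, so that $\tau_D = \tau^Y \wedge \tau^Z$. For $x = y e_1 + \tilde{x}$ the tower property gives
\[
\psi_t^D(x) = \Ee^x\bigl[\,\I_{\{\tau^Z > t\}}\, \Pp^x\bigl(\tau^Y > t \mid Z_\cdot\bigr)\bigr].
\]
Since the factor $\I_{\{\tau^Z > t\}}$ is measurable with respect to the path $Z_\cdot$ and does not depend on $y$, it suffices to show that the conditional survival probability $y \mapsto \Pp^x(\tau^Y > t \mid Z_\cdot)$ is, almost surely, nondecreasing on $(-a,0]$, nonincreasing on $[0,a)$, and concave on $(-a/2,a/2)$.

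To analyze this conditional law, I would first approximate $X$ by compound Poisson processes $X^\eps$ with L\'evy measure $\nu\, \I_{\{|z|>\eps\}}$, which remains isotropic and radially nonincreasing. Since the isotropic L\'evy measure assigns zero mass to the axis $\{z_2 = \cdots = z_d = 0\}$ (using $d \ge 2$), the jumps of $Y$ and of $Z$ occur simultaneously, almost surely. At each jump time $T_k$, conditional on $\Delta Z_{T_k} = \tilde{w}$, the law of $\Delta Y_{T_k}$ has density on $\R$ proportional to $\nu\bigl(\sqrt{y^2+|\tilde{w}|^2}\bigr)$, which is even in $y$ and nonincreasing in $|y|$: a symmetric unimodal density. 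Conditional on the entire path $Z_\cdot$, the jumps $\{\Delta Y_{T_k}\}_k$ are furthermore independent. Thus, given $Z_\cdot$, the process $Y$ is a time-inhomogeneous random walk with independent symmetric unimodal increments, started from $y$, and $\{\tau^Y > t\}$ is exactly the event that this walk stays in $(-a,a)$.

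The final ingredient is a discrete-time analog of Theorem~\ref{main1}: for any independent (not necessarily identically distributed) symmetric unimodal jumps $\xi_1, \dots, \xi_n \in \R$, the probability that $y + \sum_{i \le k}\xi_i \in (-a,a)$ for every $k = 0, \dots, n$ is nondecreasing in $y$ on $(-a,0]$, nonincreasing on $[0,a)$, and concave on $(-a/2,a/2)$. If the proof of Theorem~\ref{main1} proceeds via compound Poisson approximation and a jump-by-jump argument --- the natural route for isotropic unimodal L\'evy processes --- then extending it from identically distributed to merely independent symmetric unimodal increments should be routine. Combined with the conditioning above, this gives the result for each $X^\eps$; pointwise passage to the limit $\eps \to 0^+$ preserves both monotonicity and concavity and yields the statement for $X$. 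The main obstacle I foresee is precisely this extension of the one-dimensional argument to non-identically distributed walks; once it is in place the higher-dimensional statement follows by the reduction above.
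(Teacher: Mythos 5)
Your reduction is appealing, but it rests on an unproven lemma that is in fact the whole difficulty, and your guess about how Theorem~\ref{main1} is proved is wrong. The paper does \emph{not} prove the one-dimensional case by compound Poisson approximation and a jump-by-jump (discrete-time) argument. It proves Theorems~\ref{main1} and~\ref{main2} simultaneously and directly in continuous time, via a reflection identity (Proposition~\ref{fformula}): using the strong Markov property at the exit time of a sub-cylinder $U$ and the Ikeda--Watanabe formula, the difference $\psi_t^D(x)-\psi_t^D(T_U(x))$ is written as an integral of $\bigl(p_U(s,x,y)-p_U(s,T_U(x),y)\bigr)\bigl(\nu(y-z)-\nu(T_U(y)-z)\bigr)$ against $\psi_{t-s}^D(z)$; the first factor is nonnegative by the positivity of the ``difference process'' kernel from \cite{KR2015}, the second has a definite sign by unimodality of $\nu$, and mid-concavity then comes from comparing the reflection windows $U$ and $W=U+v$. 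Your discrete-time analog of Theorem~\ref{main1} --- that $y\mapsto \Pp\bigl(y+\sum_{i\le k}\xi_i\in(-a,a),\ k\le n\bigr)$ is mid-concave for independent symmetric unimodal steps --- is exactly the iterated-integral statement (\ref{iterated}) that the author explicitly declines to study. The method of \cite{BKM2006} establishes it only for Gaussian kernels and their subordinated mixtures (log-concavity and explicit Gaussian identities are used); it does not extend to arbitrary symmetric unimodal step distributions, and already for two non-Gaussian unimodal steps the concavity involves a genuine competition between boundary terms of opposite signs. The monotonicity half of your lemma is classical (symmetric unimodal rearrangement under convolution), but the concavity half is an open, not a routine, extension; asserting it without proof leaves the main content of the theorem unestablished.

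Two secondary points. First, your truncation $\nu\,\I_{\{|z|>\eps\}}$ breaks the unimodality you need: conditional on $\Delta Z_{T_k}=\tilde w$ with $|\tilde w|\le\eps$, the density of $\Delta Y_{T_k}$ is proportional to $\nu\bigl(\sqrt{y^2+|\tilde w|^2}\,\bigr)\I_{\{y^2+|\tilde w|^2>\eps^2\}}$, which vanishes on a neighbourhood of $y=0$ and is therefore not unimodal. (Truncating by $|\tilde z|>\eps$ instead would repair this, since then every surviving jump has $|\tilde w|>\eps$.) Second, the passage $\eps\to 0^+$ requires an argument that $P^x(\tau_D^{X^\eps}>t)\to P^x(\tau_D>t)$, which is plausible for Lipschitz $D$ (using $P^x(\tau_D=t)=0$ and $P^x(X(\tau_D)\in\partial D)=0$) but is not automatic. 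Both of these are fixable; the missing discrete-time concavity lemma is not, at least not by the route you sketch.
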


In Section 4 we will apply these results to obtain analogous properties of first eigenfunctions for the related Dirichlet eigenvalue problem.

\begin{remark}
The property that the function $x \to \psi_t^{(-a,a)}(x)$ or $y \to \psi_t^{(-a,a)\times F}(y e_1 +\tilde{x})$ is concave on $(-a/2,a/2)$ is called {\it{mid-concavity}} (see Definition 1.1 in \cite{BKM2006}).
\end{remark}

The above results for isotropic $\alpha$-stable processes in $\R^d$ (where $\alpha \in (0,2]$) and intervals $(-a,a)$ or hyperrectangles $\prod_{i = 1}^d (-a_i,a_i)$ are well known. They were proved by R. Ba{\~n}uelos, T. Kulczycki and P. M{\'e}ndez-Hern{\'a}ndez in \cite{BKM2006}. Indeed, the methods from \cite{BKM2006} allow to extend these results for intervals or hyperrectangles to arbitrary subordinated Brownian motions in $\R^d$. 

The main novelty of the results in this paper is that they concern arbitrary isotropic pure jump, unimodal L{\'e}vy processes in $\R^d$ with an infinite L{\'e}vy measure. 
The method used in the proof of Theorems \ref{main1}, \ref{main2} is completely different than the method used in \cite{BKM2006}. The key idea of the proof of Theorems \ref{main1}, \ref{main2} is probabilistic. A very important step in this proof is the use of some results concerning the so-called {\it{difference processes}} which were introduced in \cite{KR2015}. 

The proof in \cite{BKM2006} is analytical. The main idea in \cite{BKM2006} (for $D = (-a,a)$) is to prove some properties of 
$$
\int_{-a}^a \ldots \int_{-a}^a \prod_{i = 1}^n p_{t_i}(x_{i-1} -x_i) \, dx_1 \ldots dx_n
$$
for gaussian kernels $p_t(x)$ and then use subordination to show monotonicity and midconcavity for
\begin{equation}
\label{iterated}
x \to P^x(X_{t_1} \in D, \ldots,X_{t_n} \in D)
\end{equation}
The results for $P^x(\tau_D > t)$ in \cite{BKM2006} follows by a limiting procedure. 

Note that in this paper we do not study properties of the function (\ref{iterated}) but we study only properties of $P^x(\tau_D > t)$.

Very recently many researchers have been studying convexity properties of solutions of equations involving fractional Laplacians see \cite{BB2015}, \cite{G2014}, \cite{KS2014}, \cite{K2014}, \cite{NR2015}. In particular, concavity properties of the first eigenfunction for the Dirichlet eigenvalue problem on an interval for the fractional Laplacians have been studied in \cite{BB2015} and \cite{KS2014}. In this paper, using a probalistic approach, we obtain concavity properties of the first eigenfunction for the Dirichlet eigenvalue problem on an interval for much more general nonlocal operators, namely generators of the isotropic unimodal L{\'e}vy processes.

The paper is organized as follows. In Section 2 we present notation and collect some known facts needed in the rest of the paper.  Section 3 contains proofs of Theorems \ref{main1}, \ref{main2}. In Section 4 we present regularity results of first eigenfunctions for the related Dirichlet eigenvalue problem.

\section{Preliminaries}

For $x \in \R^d$ and $r > 0$ we let $B(x,r) = \{y \in \R^d: \, |y - x| < r\}$. 
A Borel measure on $\R^d$ is called {\it{isotropic unimodal}} if on $\R^d \setminus \{0\}$ it is absolutely continuous with respect to the Lebesgue measure and has a finite radial, radially nonincreasing density function (such measures may have an atom at the origin). 

A L{\'e}vy process $X = (X_t, t \ge 0)$ in $\R^d$ is called isotropic unimodal if its transition probability $p_t(dx)$ is isotropic unimodal for all $t > 0$.  When additionally $X$ is a pure-jump process then the following L{\'e}vy-Khintchine formula holds for $t > 0$ and $\xi \in \R^d$,
$$
E^0 e^{i\xi X_t} = \int_{\R^d} e^{i\xi x} p_t(dx) = e^{-t \psi(\xi)} \quad \text{where} \quad \psi(\xi) = \int_{\R^d} (1 - \cos(\xi x)) \nu(dx).
$$
$\psi$ is the characteristic exponent of $X$ and $\nu$ is the L{\'e}vy measure of $X$. $E^0$ is the expected value for the process $X$ starting from $0$. Recall that a L{\'e}vy measure is a measure concentrated on $\R^d \setminus \{0\}$ such that $\int_{\R^d} (|x|^2 \wedge 1) \nu(dx) < \infty$. Isotropic unimodal pure-jump L{\'e}vy measures are characterized in \cite{W1983} by unimodal L{\'e}vy measures $\nu(dx) = \nu(x) \, dx = \nu(|x|) \, dx$. 

Unless explicitly stated otherwise in what follows we assume that $X$ is a pure-jump isotropic unimodal L{\'e}vy process in $\R^d$ with (isotropic unimodal) infinite L{\'e}vy measure $\nu$. Then for any $t > 0$ the measure $p_t(dx)$ has a radial, radially nonincreasing density function $p_t(x) = p_t(|x|)$ on $\R^d$ with no atom at the origin. However, it may happen that $p_t(0) = \infty$, for some $t > 0$. As usual, we denote by $P^x$ and $E^x$ the probability measure and the corresponding expectation for the the process starting from $x \in \R^d$.

Let $D \subset \R^d$ be an open, nonempty set.  We define a {\it{killed process}} $X_t^D$ by $X_t^D = X_t$ if $t < \tau_D$ and $X_t^D = \partial$ otherwise, where $\partial$ is some point adjoined to $D$. The transition density for $X_t^D$ on $D$ is given by
\begin{equation}
\label{pdtxy}
p_D(t,x,y) = p_t(x-y) - E^x(p_{t - \tau_D}(X(\tau_D)-y), \, t > \tau_D), \quad x,y \in D, \, t > 0,
\end{equation}
that is for any Borel set $A \subset \R^d$ we have
$$
P^x(X_t^D \in A) = \int_A p_D(t,x,y) \, dy, \quad x \in D, \, t> 0.
$$
We have $p_D(t,x,y) = p_D(t,y,x)$, $x, y \in D$, $t > 0$.
We define the {\it{Green function}} for $X_t^D$ by 
$$
G_D(x,y) = \int_0^{\infty} p_D(t,x,y) \, dt, \quad x,y \in D,
$$
$G_D(x,y) = 0$ if $x \notin D$ or $y \notin D$. 

Let $D \subset \R^d$ be an open, nonempty set. The distribution $P^x(X(\tau_D) \in \cdot)$ is called the {\it{harmonic measure}} with respect to $X$. The harmonic measure for Borel sets $A \subset (\overline{D})^c$ is given by the Ikeda-Watanabe formula \cite{IW1962},
\begin{equation}
\label{IW}
P^x(X(\tau_D) \in A) = \int_A \int_D G_D(x,y) \nu(y-z) \, dy \, dz, \quad x \in D.
\end{equation} 

When $D \subset \R^d$ is a bounded, open Lipschitz set then we have  \cite{Sztonyk2000}, \cite{Millar1975},
\begin{equation}
\label{Xboundary}
P^x(X(\tau_D) \in \partial D) = 0, \quad x \in D.
\end{equation}
It follows that for such sets $D$ the Ikeda-Watanabe formula (\ref{IW}) holds for any Borel set $A \subset D^c$.
Let $D \subset \R^d$ be an open, nonempty set. For any $s > 0$, $x \in D$, $z \in (\overline{D})^c$ put 
\begin{equation}
\label{hformula}
h_D(x,s,z) = \int_D p_D(s,x,y) \nu(y-z) \, dy.
\end{equation}
By the Ikeda-Watanabe formula \cite{IW1962} for any Borel $A \subset (0,\infty)$, $B \subset (\bar{D})^c$ we have 
\begin{equation}
\label{IW2}
P^x(\tau_D \in A, X(\tau_D) \in B) = \int_A \int_B h_D(x,s,z) \, dz \, ds, \quad x \in D.
\end{equation}
If (\ref{Xboundary}) holds then we can take $B \subset D^c$ in (\ref{IW2}). 

\section{The monotonicity and midconcavity}

We will prove both Theorems \ref{main1}, \ref{main2} simultaneously. Let $X$ be an isotropic, pure jump, unimodal L{\'e}vy process in $\R^d$, $d \ge 1$ with an infinite L{\'e}vy measure. Let $D = (-a,a) \times F$, where $a > 0$ and $F \subset \R^{d -1}$ is a bounded Lipschitz domain (or $D = (-a,a)$ when $d = 1$). Put $e_1 = (1,0\ldots,0) \in \R^d$. Note that for any $x \in D$ we have $P^x(X(\tau_D) \in \partial D) = 0$. 

The key point in this section is the following result.
\begin{proposition}
\label{fformula}
Let $U = (b,c) \times F$ (or $U = (b,c)$ when $d = 1$), where $-a \le b < c \le a$. Put $l(U) = b$, $r(U) = c$, $m(U) = (b+c)/2$, $U_- = (b,m(U)) \times F$, $U_+ = (m(U),c) \times F$, $H_-(U) = (-\infty,m(U)) \times \R^{d-1}$, $H_+(U) = (m(U),\infty) \times \R^{d-1}$ (or $U_- = (b,m(U))$, $U_+ = (m(U),c)$, $H_-(U) = (-\infty,m(U))$, $H_+(U) = (m(U),\infty)$ when $d = 1$). For any $x = (x_1,\ldots,x_d) \in \R^d$ let
$$
T_U(x) = x + 2 e_1(m(U) - x_1),
$$
(this is the reflection with respect to the hyperplane $x_1 = m(U)$, or with respect to a point $m(U)$ when $d = 1$). For any $s > 0$, $x \in U_+$, $z \in (\overline{U})^c$ put
$$
f_s^U(x,z) = \int_{U_+} \left(p_U(s,x,y) - p_U(s,T_U(x),y)\right) \left(\nu(y - z) - \nu(T_U(y) - z)\right) \, dy.
$$
For any $x \in U_+$, $t > 0$ we have
\begin{equation}
\label{diffformula}
\psi_t^D(x) - \psi_t^D(T_U(x)) =
\int_{U^c} \int_0^t f_s^U(x,z) \psi_{t - s}^D(z) \, ds \, dz,
\end{equation}
\begin{eqnarray}
\label{positivity}
f_s^U(x,z) &\ge 0& \quad \text{for} \quad s > 0, \, z \in H_+(U) \setminus \overline{U_+},\\
\label{negativity}
f_s^U(x,z) &\le 0& \quad \text{for} \quad s > 0, \, z \in H_-(U) \setminus \overline{U_-}.
\end{eqnarray}
\end{proposition}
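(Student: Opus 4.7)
The plan is to prove \eqref{diffformula} by a strong Markov decomposition at $\tau_U$, and to derive the sign statements \eqref{positivity}, \eqref{negativity} by combining a reflection inequality for the killed density $p_U$ with the geometric monotonicity of the radial L\'{e}vy density $\nu$. First, since $U \subset D$, the inclusion $\{\tau_U > t\} \subset \{\tau_D > t\}$ together with the strong Markov property at $\tau_U$ yields
\[
\psi_t^D(x) = P^x(\tau_U > t) + E^x\bigl[\psi_{t-\tau_U}^D(X_{\tau_U});\, \tau_U \le t\bigr],
\]
and the Ikeda--Watanabe formula \eqref{IW2} (valid on all of $U^c$ because $\psi_{t-s}^D$ vanishes off $D$ and $P^x(X_{\tau_U}\in\partial U)=0$ by \eqref{Xboundary}) rewrites the second term as $\int_0^t\int_{U^c} h_U(x,s,z)\psi_{t-s}^D(z)\,dz\,ds$. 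Since $T_U$ is an isometry mapping $U$ onto itself and $X$ is isotropic, $P^x(\tau_U>t) = P^{T_U(x)}(\tau_U>t)$, so subtracting the analogous identity written at $T_U(x)$ gives
\[
\psi_t^D(x) - \psi_t^D(T_U(x)) = \int_0^t\!\int_{U^c}\bigl(h_U(x,s,z) - h_U(T_U(x),s,z)\bigr)\psi_{t-s}^D(z)\,dz\,ds.
\]

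Next I identify the integrand with $f_s^U(x,z)$. Splitting $h_U(x,s,z)=\int_U p_U(s,x,y)\nu(y-z)\,dy$ along $U=U_-\cup U_+$ and applying the change of variables $y\mapsto T_U(y)$ to the integral over $U_-$, together with the invariance $p_U(s,x,y)=p_U(s,T_U(x),T_U(y))$ (which holds because both $U$ and the law of $X$ are $T_U$-symmetric), I obtain
\[
h_U(x,s,z) = \int_{U_+} p_U(s,x,y)\nu(y-z)\,dy + \int_{U_+} p_U(s,T_U(x),y)\nu(T_U(y)-z)\,dy.
\]
Subtracting the analogous expression for $h_U(T_U(x),s,z)$ (obtained by swapping $x$ and $T_U(x)$) immediately produces $f_s^U(x,z)$, which proves \eqref{diffformula}.

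To analyze the sign of $f_s^U(x,z)$ I treat its two factors separately. The first factor obeys the reflection inequality
\[
p_U(s,x,y) - p_U(s,T_U(x),y) \ge 0 \qquad \text{for all } x,y \in U_+,\, s>0,
\]
which says, roughly, that the symmetric killed process starting in the right half $U_+$ is more likely to be found later at a point of $U_+$ than is the process started at the reflected point. This is the principal probabilistic input and I expect to deduce it from the difference-process results of \cite{KR2015} that the introduction flags; once $p_U$ is related to $p_s$ via \eqref{pdtxy}, the inequality follows from the unimodality of $p_s$ together with a reflection coupling across the hyperplane $\{x_1=m(U)\}$. The second factor is elementary: computing $|y-z|^2 - |T_U(y)-z|^2 = -4(y_1-m(U))(z_1-m(U))$ shows that for $y \in U_+$ one has $|y-z| \le |T_U(y)-z|$ when $z \in H_+(U)$ and the reverse when $z \in H_-(U)$, so the radial nonincreasing profile of $\nu$ gives $\nu(y-z)-\nu(T_U(y)-z) \ge 0$ in the first case and $\le 0$ in the second. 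Multiplying the two factors inside the integral delivers \eqref{positivity} and \eqref{negativity}. The main obstacle in the whole argument is precisely the reflection inequality for $p_U$; once it is quoted from \cite{KR2015}, the rest is book-keeping with the Ikeda--Watanabe formula and the elementary geometry of reflection across a hyperplane.
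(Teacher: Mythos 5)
Your proposal is correct and follows essentially the same route as the paper: strong Markov decomposition at $\tau_U$ combined with the Ikeda--Watanabe formula, cancellation of $\psi_t^U$ by reflection symmetry, the $U_-\to U_+$ change of variables using $p_U(s,x,y)=p_U(s,T_U(x),T_U(y))$, the reflection inequality $p_U(s,x,y)-p_U(s,T_U(x),y)\ge 0$ quoted from \cite{KR2015}, and the elementary distance comparison plus unimodality of $\nu$ for the second factor. The only difference is cosmetic (you phrase the decomposition via $h_U$ and sketch a coupling heuristic for the \cite{KR2015} inequality, which the paper simply cites), so no gap remains.
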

\begin{proof}
By the strong Markov property and (\ref{IW2}) for any $x \in U$, $t > 0$ we have
\begin{eqnarray*}
\psi_t^D(x) &=& P^{x}(\tau_D > t)\\
&=& P^{x}(\tau_U > t) + E^x\left(\tau_U \le t, \left[P^{X(\tau_U)}(\tau_D > t-s)\right]_{s = \tau_U}\right)\\
&=& \psi_t^U(x) + \int_U \int_0^t p_U(s,x,y) \int_{U^c} \psi_{t - s}^D(z) \nu(y - z) \, dz \, ds \, dy.
\end{eqnarray*}
It follows that
\begin{eqnarray}
\label{IW3}
&& \psi_t^D(x) -  \psi_t^D(T_U(x)) = \psi_t^U(x) -  \psi_t^U(T_U(x))\\
\label{IW4}
&& + \int_U \int_0^t \left(p_U(s,x,y)-p_U(s,T_U(x),y)\right) \int_{U^c} \psi_{t - s}^D(z) \nu(y - z) \, dz \, ds \, dy.
\end{eqnarray}
For any $x \in U_+$, $t > 0$ by the symmetry of the process $X$ and the definition of $T_U(x)$ we have 
\begin{equation}
\label{psiU}
\psi_t^U(x) =  \psi_t^U(T_U(x)).
\end{equation}
For any $x \in U_+$, $t > 0$, $z \in (\overline{U})^c$ we also have
\begin{eqnarray*}
&& \int_U  \left(p_U(s,x,y)-p_U(s,T_U(x),y)\right) \nu(y - z) \, dy\\
&=& \int_{U_+}  \left(p_U(s,x,y)-p_U(s,T_U(x),y)\right) \nu(y - z) \, dy\\
&& + \int_{U_-}  \left(p_U(s,x,y)-p_U(s,T_U(x),y)\right) \nu(y - z) \, dy\\
&=& \int_{U_+}  \left(p_U(s,x,y)-p_U(s,T_U(x),y)\right) \nu(y - z) \, dy\\
&& + \int_{U_+}  \left(p_U(s,x,T_U(y))-p_U(s,T_U(x),T_{U}(y))\right) \nu\left(T_U(y) - z\right) \, dy\\
&=& \int_{U_+}  \left(p_U(s,x,y)-p_U(s,T_U(x),y)\right) \left(\nu(y - z) - \nu\left(T_U(y) - z\right)\right) \, dy,
\end{eqnarray*}
(where in the last equality we used $p_U(s,T_U(x),T_{U}(y)) = p_U(s,x,y)$). Applying this, (\ref{IW3}-\ref{IW4}) and (\ref{psiU}) we get (\ref{diffformula}).

Note that $p_U(s,x,y)-p_U(s,T_U(x),y)$ is the transition density of the so-called difference process (with respect to the hyperplane $x_1 = m(U)$ or the point $m(u)$ when $d = 1$) killed on exiting $U_+$ (see Section 4 in \cite{KR2015} for more details). By (19) in \cite{KR2015} and the first formula after the proof of Lemma 4.3 in \cite{KR2015} we obtain that $p_U(s,x,y)-p_U(s,T_U(x),y) \ge 0$ for any $s > 0$, $x, y \in U_+$. By unimodality of $\nu(x)$ we obtain that $\nu(y - z) - \nu\left(T_U(y) - z\right) \ge 0$ for any $y \in U_+$, $z \in H_+(U) \setminus \overline{U_+}$ and $\nu(y - z) - \nu\left(T_U(y) - z\right) \le 0$ for any $y \in U_+$, $z \in H_-(U) \setminus \overline{U_-}$. This gives (\ref{positivity}) and (\ref{negativity}).
\end{proof}

Now we will show our main results.

\begin{proof}[proof of Theorems \ref{main1}, \ref{main2}]
First we study monotonicity of $\psi_t^{D}$. Fix $\tilde{x} \in \{0\} \times F$ and $-a < x_1' < x_1'' \le 0$. Put $x' = x_1' e_1 +\tilde{x}$, $x'' = x_1'' e_1 +\tilde{x}$ (or $x' = x_1'$, $x'' = x_1''$ when $d = 1$). Let $b = -a$, $x_* = (x_1' + x_1'')/2$, $c = -a + 2(x_* - (-a)) = a + x_1' + x_1''$, $U = (b,c) \times F$ (or $U = (b,c)$ when $d = 1$). Note that $m(U) = x_*$ and $T_U(x'') = x'$. By (\ref{diffformula}) for any $t > 0$ we get
\begin{eqnarray}
\label{psi1}
\psi_t^D(x'') - \psi_t^D(x') &=& \psi_t^D(x'') - \psi_t^D(T_U(x''))\\
\label{psi2}
 &=& \int_{U^c} \int_0^t f_s^U(x'',z) \psi_{t - s}^D(z) \, ds \, dz\\
\label{psi3}
&=& \int_{D \setminus \overline{U}} \int_0^t f_s^U(x'',z) \psi_{t - s}^D(z) \, ds \, dz.
\end{eqnarray}
Note that $D \setminus \overline{U} = (c,a) \times F$ (or $D \setminus \overline{U} = (c,a)$ when $d = 1$) and $c = a + x_1' + x_1'' > m(U) = (x_1' + x_1'')/2$ so $D \setminus \overline{U} \subset H_+(U) \setminus \overline{U_+}$. This, (\ref{psi1}-\ref{psi3}) and (\ref{positivity}) give $\psi_t^D(x'') \ge \psi_t^D(x')$. It follows that the function $y \to \psi_t^D(y e_1 + \tilde{x})$ (or $y \to \psi_t^D(y)$ when $d = 1$) is nondecreasing on $(-a,0]$. By symmetry of the process $X$ and the domain $D$ the function $y \to \psi_t^D(y e_1 + \tilde{x})$ (or $y \to \psi_t^D(y)$ when $d = 1$) is nonincreasing on $[0,a)$.

Now we will study midconcavity of the function $\psi_t^{D}$.
Fix $\tilde{x} \in \{0\} \times F$ and $-a/2 < x_1' < x_1'' <x_1''' \le 0$ such that $x_1''-x_1' = x_1'''- x_1''$. Put $x' = x_1' e_1 +\tilde{x}$, $x'' = x_1'' e_1 +\tilde{x}$, $x''' = x_1''' e_1 +\tilde{x}$ (or $x' = x_1'$, $x'' = x_1''$, $x''' = x_1'''$ when $d = 1$). As above, let $b = -a$, $x_* = (x_1' + x_1'')/2$, $c = -a + 2(x_* - (-a)) = a + x_1' + x_1''$, $U = (b,c) \times F$ (or $U = (b,c)$ when $d = 1$). We have $m(U) = x_*$ and $T_U(x'') = x'$. Note that $l(U) = -a$ and $r(U) = c = a + x_1'+x_1'' \in (0,a)$ (because $-a/2 < x_1' < x_1'' < 0$). 

Let $v_1 = x_1'' - x_1'$ and $v = v_1 e_1$. Put 
$$
W = U + v.
$$
Note that $W = (b+v_1,c+v_1) \times F$ (or $W = (b+v_1,c+v_1)$ when $d = 1$), $m(W) = m(U) + v_1$, $l(W) = l(U) + v_1$, $r(W) = r(U) + v_1 = a + 2 x_1'' \in (0,a)$. 

We have
$$
T_W(x) = x + 2 e_1(m(W) - x_1).
$$
It follows that
\begin{eqnarray*}
T_W(x''') 
&=& x''' + 2 e_1(m(W) - x_1''')\\
&=& x''' + 2 e_1\left(\frac{x_1''' + x_1''}{2} - x_1'''\right)\\
&=& x''' + e_1(x_1'' - x_1''')\\
&=& x''.
\end{eqnarray*}
Using this and Proposition \ref{fformula} applied to $W$ we get for any $t > 0$
\begin{eqnarray}
\label{psi4}
\psi_t^D(x''') - \psi_t^D(x'') &=& \psi_t^D(x''') - \psi_t^D(T_W(x'''))\\
\label{psi5}
 &=& \int_{W^c} \int_0^t f_s^W(x''',z) \psi_{t - s}^D(z) \, ds \, dz.
\end{eqnarray}
Note that $W_+ = U_+ + v$ and $x''' = x'' + v$. Using this and the definition of $f_s^W(x,z)$ we get for any $s > 0$,  $z \in W^c$
\begin{eqnarray}
\label{f1}
&& f_s^W(x''',z)\\ 
\label{f2}
&=& \int_{W_+} \left(p_W(s,x''',y) - p_W(s,T_W(x'''),y)\right) \left(\nu(y - z) - \nu(T_W(y) - z)\right) \, dy\\
\label{f3}
&=& \int_{U_+ + v} \left(p_W(s,x'' + v,y) - p_W(s,T_W(x'' + v),y)\right)\\
\label{f4}
&&\quad \quad\quad \quad \quad \quad \quad \quad \quad \quad \quad \quad \quad \quad \times \left(\nu(y - z) - \nu(T_W(y) - z)\right) \, dy.
\end{eqnarray}
Using substitution $q = y - v$ this is equal to 
\begin{eqnarray}
\label{f5}
&&\int_{U_+} \left(p_W(s,x'' + v,q + v) - p_W(s,T_W(x'' + v), q + v)\right)\\
\label{f6}
&&\quad \quad\quad \quad \quad \quad \quad \quad \quad \quad \times \left(\nu(q + v - z) - \nu(T_W(q + v) - z)\right) \, dq.
\end{eqnarray}
For any $s > 0$,  $q \in U_+$ we have 
$$
p_W(s,x'' + v,q + v) = p_{U + v}(s,x'' + v,q + v) = p_U(s,x'',q ).
$$
By the definition of $T_W$ and the equality $m(W) = m(U) + v_1$ we get
$$
T_W(x''+v) = x'' + v + 2 e_1(m(W) - x_1'' - v_1) 
= x'' + 2 e_1(m(U) - x_1'') + v = T_U(x'') + v.
$$
Hence for any $s > 0$,  $q \in U_+$ we obtain
$$
p_W(s,T_W(x'' + v),q + v) = p_{U + v}(s,T_U(x'') + v,q + v) = p_U(s,T_U(x''),q ).
$$
By similar arguments as above for any $q \in U_+$ we get $T_W(q + v) = T_U(q) + v$. Hence for any $q \in U_+$ and $z \in W^c$ we obtain
$$
\nu(q + v - z) - \nu(T_W(q + v) - z) = \nu(q - (z - v)) - \nu(T_U(q) - (z - v)).
$$
Using this, (\ref{f1}-\ref{f4}) and (\ref{f5}-\ref{f6}) we get for any $s > 0$, $z \in (\overline{W})^c$
\begin{eqnarray*}
&& f_s^W(x''',z)\\
&=& \int_{U_+} \left(p_U(s,x'',q) - p_U(s,T_U(x''), q)\right) \left(\nu(q - (z - v)) - \nu(T_U(q) - (z - v))\right) \, dq\\
&=& f_s^U(x'',z - v).
\end{eqnarray*}
Using this, the fact that $W^c = U^c + v$ and (\ref{psi4}-\ref{psi5}) we get for any $t > 0$
\begin{eqnarray}
\label{psi6}
\psi_t^D(x''') - \psi_t^D(x'')
 &=& \int_{W^c} \int_0^t f_s^U(x'',z - v) \psi_{t - s}^D(z) \, ds \, dz\\
 \label{psi7}
 &=& \int_{U^c} \int_0^t f_s^U(x'',z) \psi_{t - s}^D(z + v) \, ds \, dz.
\end{eqnarray}
Put 
$$
L(U) = (-\infty,l(U)) \times F \subset H_-(U), \quad R(U) = (r(U),\infty) \times F \subset H_+(U),
$$
(or $L(U) = (-\infty,l(U)) \subset H_-(U)$, $R(U) = (r(U),\infty) \subset H_+(U)$ when $d = 1$). By (\ref{psi1}-\ref{psi2}) and (\ref{psi6}-\ref{psi7}) we get for any $t > 0$
\begin{eqnarray*}
&& \psi_t^D(x'') - \psi_t^D(x')\\ 
&=& \int_{L(U)} \int_0^t f_s^U(x'',z) \psi_{t - s}^D(z) \, ds \, dz 
+ \int_{R(U)} \int_0^t f_s^U(x'',z) \psi_{t - s}^D(z) \, ds \, dz\\
&=& \text{I} + \text{II},
\end{eqnarray*}
\begin{eqnarray*}
&&\psi_t^D(x''') - \psi_t^D(x'')\\
 &=& \int_{L(U)} \int_0^t f_s^U(x'',z) \psi_{t - s}^D(z + v) \, ds \, dz 
+ \int_{R(U)} \int_0^t f_s^U(x'',z) \psi_{t - s}^D(z + v) \, ds \, dz\\
&=& \text{III} + \text{IV}.
\end{eqnarray*}
Since $l(U) = -a$ we get $\text{I} = 0$. Since $L(U) + v \subset H_-(U)$ and by (\ref{negativity}) $f_s^U(x'',z) \le 0$ for $z \in H_-(U)$ we get $\text{III} \le 0$. Recall that $r(U) > 0$ so monotonicity of $y \to \psi_t^D(y e_1 + \tilde{x})$ (or $y \to \psi_t^D(y)$ when $d = 1$) implies that for any $z \in R(U)$, $t > 0$, $s \in (0,t)$ we have $\psi_{t - s}^D(z + v) \le \psi_{t - s}^D(z)$ so $\text{IV} \le \text{II}$. Hence for any $t > 0$ we get
$$
\psi_t^D(x'') - \psi_t^D(x') \ge \psi_t^D(x''') - \psi_t^D(x'').
$$
Recall that $-a/2 < x_1' < x_1'' < x_1''' \le 0$, where $x_1'' - x_1' = x_1''' - x_1''$. Since $x_1'$, $x_1'''$ could be chosen arbitrarily we get that 
$y \to \psi_t^D(y e_1 + \tilde{x})$ (or $y \to \psi_t^D(y)$ when $d = 1$) is concave on $(-a/2,0]$. By the symmetry we obtain that $y \to \psi_t^D(y e_1 + \tilde{x})$ (or $y \to \psi_t^D(y)$ when $d = 1$) is concave on $[0,a/2)$.
\end{proof}

\section{The shape of the first eigenfunction}

Let us recall that $X$ is a pure-jump isotropic unimodal L{\'e}vy process in $\R^d$ with an infinite L{\'e}vy measure $\nu$, $\psi$ is the characteristic exponent of $X$ and $p_t$ is its transition density. In this section we additionally assume that 
\begin{equation}
\label{logpsi}
\lim_{|x| \to \infty} \frac{\psi(x)}{\log |x|} = \infty.
\end{equation}
This guarantees that for any $t > 0$ the function $p_t$ is continuous and bounded on $\R^d$. 

Let $D \subset \R^d$ be a bounded open set. The condition (\ref{logpsi}) and formula (\ref{pdtxy}) imply that for any fixed $t > 0$, $x \in D$ the function $y \to p_D(t,x,y)$ is continuous on $D$. Since $p_{D}(t,x,y) = p_D(t,y,x)$, $t > 0$, $x, y \in D$ we obtain that for any fixed $t > 0$, $y \in D$ the function $x \to p_D(t,x,y)$ is continuous on $D$. The transition operator $P_t^D$ for the killed process $X_t^D$ is defined by 
$$
P_t^D f(x) = \int_D p_D(t,x,y) f(y) \, dy, \quad x \in D, \, t > 0.
$$

Now we introduce the Dirichlet eigenvalue problem on $D$ for the L{\'e}vy process $X$. Such problem is well known in the literature see e.g. \cite{CS2005}. $\{P_t^D\}_{t \ge 0}$ forms a strongly continuous semigroup on $L^2(D)$. Since $p_D(t,x,y) \le p_t(x - y)$, $\|p_t\|_{\infty} < \infty$ and $D \subset \R^d$ is bounded we obtain that for any $t > 0$ the operator $P_t^D$ is a Hilbert-Schmidt operator. From the general theory of semigroups there exists an orthonormal basis $\{\vp_n\}_{n = 1}^{\infty}$ in $L^2(D)$ and a corresponding sequence 
$$
0 < \lambda_1 < \lambda_2 \le \lambda_3 \le \ldots, \quad \quad \quad \lim_{n \to \infty} \lambda_n = \infty,
$$
such that for any $n \in \N$, $t >0$, $x \in D$ we have
\begin{equation}
\label{spectral}
P_t^D \vp_n(x) = e^{-\lambda_n t} \vp_n(x).
\end{equation}
$\lambda_1$ has multiplicity one and we may assume that $\vp_1 > 0$ on $D$. By properties of $p_D(t,x,y)$ all eigenfunctions $\vp_n$ are bounded and continuous on $D$. It is well known that
$$
p_D(t,x,y) = \sum_{n = 1}^{\infty} e^{-\lambda_n t} \vp_n(x) \vp_n(y), \quad t > 0, \, x,y \in D.
$$
It follows that for any $t > 0$ and $x \in D$ we have
\begin{eqnarray*}
P^x(\tau_D > t) 
&=& \int_D p_D(t,x,y) \, dy \\
&=& \sum_{n = 1}^{\infty} e^{-\lambda_n t} \vp_n(x) \int_D \vp_n(y) \, dy.
\end{eqnarray*}
Hence for any $x \in D$ we have
$$
\lim_{t \to \infty} e^{\lambda_1 t} P^x(\tau_D > t) = \vp_1(x) \int_D \vp_1(y) \, dy.
$$
Using this and Theorems \ref{main1}, \ref{main2} we immediately obtain the following results.
\begin{corollary}
\label{cor1}
Let $X$ be a symmetric, pure jump, unimodal L{\'e}vy process in $\R$ satisfying (\ref{logpsi}) with an infinite L{\'e}vy measure. Let $D =(-a,a)$, where $a > 0$. Let $\vp_1$ be the first eigenfunction of the spectral problem (\ref{spectral}) on $D$ for the process $X$. Then for any $t > 0$ the function $x \to \vp_1(x)$ is nondecreasing on $(-a,0]$, nonincreasing on $[0,a)$ and concave on $(-a/2,a/2)$.
\end{corollary}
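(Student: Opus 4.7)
The plan is to transfer the three properties of the survival probability $\psi_t^D$ from Theorem \ref{main1} to the first eigenfunction $\vp_1$ by passing to the limit $t \to \infty$ in the identity
$$
\lim_{t \to \infty} e^{\lambda_1 t} P^x(\tau_D > t) = \vp_1(x) \int_D \vp_1(y) \, dy,
$$
which is established just above the statement of the corollary.

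First, I would fix $t > 0$ and apply Theorem \ref{main1} to conclude that $x \mapsto \psi_t^D(x) = P^x(\tau_D > t)$ is nondecreasing on $(-a,0]$, nonincreasing on $[0,a)$ and concave on $(-a/2,a/2)$. Multiplying by the strictly positive constant $e^{\lambda_1 t}$ obviously preserves each of these three properties, so the function $x \mapsto e^{\lambda_1 t} \psi_t^D(x)$ has the same shape for every $t > 0$.

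Next, I would let $t \to \infty$ pointwise in $x \in (-a,a)$ and use the displayed limit to identify the limiting function as $c \, \vp_1(x)$, where $c = \int_D \vp_1(y)\, dy$. Since $\vp_1 > 0$ on $D$, we have $c > 0$, so dividing by $c$ does not disturb monotonicity or concavity. Pointwise limits of nondecreasing (respectively nonincreasing) functions are nondecreasing (respectively nonincreasing), and pointwise limits of concave functions on an open interval are concave because the defining inequality $f(\lambda x + (1-\lambda)y) \ge \lambda f(x) + (1-\lambda)f(y)$ is preserved under pointwise limits. Therefore $\vp_1$ inherits all three properties of $\psi_t^D$ on the respective subintervals.

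I do not expect any serious obstacle here: the work has already been done in Theorem \ref{main1}, and the spectral decomposition plus the boundedness/continuity of $\vp_1$ (guaranteed by assumption (\ref{logpsi})) make the limiting argument routine. The only point requiring a brief comment is that concavity on the open interval $(-a/2,a/2)$ passes to the limit directly from the three-point inequality, so there is no need to worry about endpoint values. The positivity $c > 0$ should be mentioned explicitly, as it is what allows one to cancel the multiplicative constant $\int_D \vp_1$ at the end.
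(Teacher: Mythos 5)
Your proposal is correct and is exactly the argument the paper intends: the paper derives the limit $\lim_{t\to\infty} e^{\lambda_1 t}P^x(\tau_D>t)=\vp_1(x)\int_D\vp_1$ from the spectral expansion and then states that the corollary follows ``immediately'' from Theorem \ref{main1}, which is precisely your passage to the pointwise limit. You simply spell out the routine details (preservation of monotonicity and of the three-point concavity inequality under pointwise limits, and positivity of $\int_D\vp_1$) that the paper leaves implicit.
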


\begin{corollary}
\label{cor2}
Let $X$ be an isotropic, pure jump, unimodal L{\'e}vy process in $\R^d$, $d \ge 2$ satisfying (\ref{logpsi}) with an infinite L{\'e}vy measure. Let $D =(-a,a) \times F$, where $a > 0$ and $F \subset \R^{d -1}$ be a bounded Lipschitz domain. Let $\vp_1$ be the first eigenfunction of the spectral problem (\ref{spectral}) on $D$ for the process $X$. Let $e_1 = (1,0,\ldots,0) \in \R^d$. Then for any $t > 0$ and $\tilde{x} \in \{0\} \times F$ the function $y \to \vp_1(y e_1 +\tilde{x})$ is nondecreasing on $(-a,0]$, nonincreasing on $[0,a)$ and concave on $(-a/2,a/2)$.
\end{corollary}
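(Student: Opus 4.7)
The proof will be an immediate consequence of Theorem \ref{main2} combined with the limit formula
\[
\lim_{t \to \infty} e^{\lambda_1 t} P^x(\tau_D > t) = \vp_1(x) \int_D \vp_1(y) \, dy
\]
already derived above. My plan is therefore to transfer the shape properties of the survival probability $\psi_t^D$ to its rescaled pointwise limit, which up to a positive multiplicative constant is $\vp_1$.

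Fix $\tilde x \in \{0\} \times F$ and write $g_t(y) := e^{\lambda_1 t}\,\psi_t^D(y e_1 + \tilde x)$ for $t > 0$ and $y \in (-a,a)$. Since $e^{\lambda_1 t} > 0$, multiplying by this constant preserves monotonicity and concavity, so by Theorem \ref{main2} each $g_t$ is nondecreasing on $(-a,0]$, nonincreasing on $[0,a)$, and concave on $(-a/2, a/2)$. Letting $t \to \infty$ and invoking the displayed limit, $g_t(y)$ converges pointwise to $C\, \vp_1(y e_1 + \tilde x)$, where $C := \int_D \vp_1(y)\,dy$. Because $\vp_1 > 0$ on $D$, the constant $C$ is strictly positive.

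Monotonicity and concavity are preserved under pointwise limits: the pointwise limit of a sequence of nondecreasing (respectively nonincreasing, respectively concave) functions on an interval is again nondecreasing (respectively nonincreasing, respectively concave). Hence $y \mapsto C\, \vp_1(y e_1 + \tilde x)$ has all three properties on the respective intervals, and dividing by the positive constant $C$ yields the same statement for $y \mapsto \vp_1(y e_1 + \tilde x)$. The one-dimensional case (Corollary \ref{cor1}) follows by the same argument with $\tilde x$ omitted.

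There is essentially no obstacle here beyond checking that the three shape properties really do pass to pointwise limits, which is elementary (for concavity one simply passes to the limit in the inequality $g_t(\tfrac{y_1+y_2}{2}) \ge \tfrac12(g_t(y_1) + g_t(y_2))$, and similarly for monotonicity). The only substantive input is the strict positivity of $\vp_1$ on $D$, which ensures $C > 0$ and hence that dividing out the constant is legitimate; this was recorded earlier as part of the spectral setup.
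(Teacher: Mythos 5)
Your proposal is correct and follows essentially the same route as the paper: the author likewise derives the limit $\lim_{t\to\infty} e^{\lambda_1 t}P^x(\tau_D>t)=\vp_1(x)\int_D\vp_1(y)\,dy$ from the spectral expansion of $p_D(t,x,y)$ and then transfers the monotonicity and mid-concavity of $\psi_t^D$ from Theorem \ref{main2} to $\vp_1$ via this pointwise limit, using the positivity of $\int_D \vp_1$. Your explicit remarks on the preservation of these shape properties under pointwise limits merely spell out what the paper leaves as ``immediately obtain.''
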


\end{document}